\theoremstyle{plain}
\newtheorem{theorem}{Theorem}[section]
\newtheorem{lemma}[theorem]{Lemma}
\newtheorem{claim}[theorem]{Claim}
\newtheorem{cor}[theorem]{Corollary}
\newtheorem{fact}[theorem]{Fact}
\theoremstyle{definition}
\newtheorem{definition}[theorem]{Definition}
\theoremstyle{remark}
\newcommand{\lemref}[1]{\hyperref[#1]{Lemma \ref*{#1}}}
\newcommand{\clmref}[1]{\hyperref[#1]{Claim \ref*{#1}}}
\newcommand{\thmref}[1]{\hyperref[#1]{Theorem \ref*{#1}}}
\newcommand{\propref}[1]{\hyperref[#1]{Proposition \ref*{#1}}}
\newcommand{\corref}[1]{\hyperref[#1]{Corollary \ref*{#1}}}
\newcommand{\defref}[1]{\hyperref[#1]{Definition \ref*{#1}}}
\newcommand{\remref}[1]{\hyperref[#1]{Remark \ref*{#1}}}
\newcommand{\conjref}[1]{\hyperref[#1]{Conjecture \ref*{#1}}}
\newcommand{\factref}[1]{\hyperref[#1]{Fact \ref*{#1}}}
\newcommand{\Ker}{\mathrm{Ker}}
\newcommand{\Imm}{\mathrm{Im}}
\newcommand*{\defeq}{\mathrel{\rlap{%
                     \raisebox{0.27ex}{$\m@th\cdot$}}%
                     \raisebox{-0.27ex}{$\m@th\cdot$}}%
                     =}
\numberwithin{equation}{section}
\begin{document}

\title{Semi-free subgroups of a profinite surface group}

\author{Matan Ginzburg and Mark Shusterman}
\date{}

\maketitle

\abstract{

We show that every closed normal subgroup of infinite index in a profinite surface group $\Gamma$ is contained in a semi-free profinite normal subgroup of $\Gamma$. This answers a question of Bary-Soroker, Stevenson, and Zalesskii.

}

\section{Introduction}

The classical theorem of Nielsen and Schreier states that every subgroup of a free group is free. Trying to extend this result to profinite groups fails, for example $\mathbb{Z}_2 \leq \hat{\mathbb{Z}}$. This naturally gives rise to the question of finding conditions upon which a subgroup of a free profinite group is free.

Some results in this direction are known, for instance, Melnikov's characterization of normal subgroups of free profinite groups, and Haran's diamond theorem. Results of slightly different flavor have been obtained by Shusterman, in \cite{Mark} where, for example, the following is proven.
\begin{theorem} \label{Mark's theorem}

Let F be a nonabelian finitely generated free profinite group, and let $H \leq_c F$ be a closed subgroup of infinite index in F. Then there exists a free profinite subgroup $H \leq L \leq_c F$ of rank $\aleph_0$.

\end{theorem}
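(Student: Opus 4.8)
Since $F$ is nonabelian and finitely generated, it is free profinite of finite rank $n\ge 2$. I claim it suffices to produce a closed subgroup $H\le L\le_c F$ that is free profinite and \emph{not} topologically finitely generated: a finitely generated free profinite group is metrizable, so each of its closed subgroups has rank at most $\aleph_0$, whence such an $L$ automatically has $\operatorname{rank}(L)=\aleph_0$. So the whole problem becomes: exhibit a closed, non-finitely-generated, free profinite subgroup between $H$ and $F$.

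The construction proceeds by descent. Using $[F:H]=\infty$, I would build a strictly descending chain $F=F_0\supsetneq F_1\supsetneq\cdots$ of open subgroups with $H\le\bigcap_k F_k=:L$; then $[F:L]=\infty$ automatically, and $L$ is projective, being a closed subgroup of $F$. The aim is to keep each step \emph{normal} with a controlled successive quotient: if $\langle\langle H\rangle\rangle_{F_k}$ has infinite index in $F_k$, then $H$ lies in a proper closed normal subgroup of $F_k$, in fact in the kernel of a surjection of $F_k$ onto some nontrivial finite group, which may be taken simple; take $F_{k+1}$ to be such a kernel. If the chain can be built this way with its successive finite quotients $F_k/F_{k+1}$ arranged so that Melnikov's characterization of free normal subgroups applies along the chain, then $L$ is free profinite, hence of rank $\aleph_0$ as $[F:L]=\infty$, and $L$ is the subgroup sought.

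The difficulty --- and the point at which Haran's diamond theorem enters --- is that a priori $H$ may surject onto many finite quotients of $F$, so that $\langle\langle H\rangle\rangle_F=F$, no proper closed normal subgroup of $F$ contains $H$, and this may persist in open subgroups; then the chain cannot be kept normal and Melnikov's theorem alone does not suffice. To handle this I would pass to a suitable open subgroup $E\le_c F$ containing $H$ and produce closed normal subgroups $N_1,N_2$ of $E$, with $[E:N_1]=\infty$, such that
\[
N_1\cap N_2\ \le\ L:=\overline{\langle H,\ N_1\cap N_2\rangle},\qquad N_1\not\le L,\qquad N_2\not\le L .
\]
Here $L$ need only be \emph{sandwiched} between the intersection of two closed normal subgroups of $E$ and $E$ itself, rather than being normal with a prescribed quotient, so Haran's diamond theorem applies and gives that $L$ is free profinite; and $L$ is not finitely generated since it contains $N_1\cap N_2$, which, being a closed normal subgroup of infinite index in the finitely generated free group $E$, is itself not finitely generated. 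The main obstacle is precisely this last construction: choosing $E$ and the pair $(N_1,N_2)$ so that $L$ is at once large enough to contain $H$, of infinite index, and genuinely a diamond --- i.e., so that the image of $H$ in $E/(N_1\cap N_2)$ is small enough to miss the directions of both $N_1/(N_1\cap N_2)$ and $N_2/(N_1\cap N_2)$ while $E/(N_1\cap N_2)$ remains infinite.
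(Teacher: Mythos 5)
This theorem is quoted from \cite{Mark} and is not proved in the present paper, so there is no in-paper proof to compare against; but your proposal, judged on its own, is a strategy outline rather than a proof, and its central step is missing. Your opening reduction is fine: $F$ is metrizable, so any closed free subgroup that is not topologically finitely generated has rank exactly $\aleph_0$ (though note that the parenthetical inference ``$L$ is free and $[F:L]=\infty$, hence $\mathrm{rank}(L)=\aleph_0$'' is not valid for non-normal $L$ --- a procyclic subgroup has infinite index --- so you must stick with the non-finite-generation criterion you stated first). The two routes you then sketch both stop exactly where the difficulty begins. The Melnikov route is unjustified: the intersection $L$ of your chain is not normal in $F$ (only each $F_{k+1}$ in $F_k$), Melnikov's characterization does not apply to it, and freeness is not known to pass to intersections of descending chains without a genuine argument. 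The diamond route is worse than incomplete: in the hardest case --- when the normal closure of $H$ in every open subgroup $E\supseteq H$ is all of $E$, or more generally when the relevant quotients are hereditarily just infinite --- no admissible pair $(N_1,N_2)$ exists, the quantity $N_1\cap N_2$ may be trivial (in which case your non-finite-generation argument for $L$ also collapses), and this is precisely the case that Haran's diamond theorem is known not to reach. Indeed, the surface-group analogue of your plan is exactly what Bary-Soroker, Stevenson, and Zalesskii carried out, and it covers only the non-hereditarily-just-infinite case; the whole point of \cite{Mark}, and of the present paper, is to supply a different mechanism for the remaining case.

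For contrast, the actual mechanism (mirrored in Section 3 of this paper for surface groups) avoids diamonds entirely: one builds a descending chain of open subgroups $F_k\supseteq H$ whose ranks grow by Schreier's formula, observes that any finite split embedding problem for $L=\bigcap_k F_k$ is induced from some $F_k$ of very large rank, and uses a pigeonhole argument on generators congruent modulo $L$ (the analogue of \lemref{Pavel lemma} and \lemref{Lior and Pavel lemma}) to solve the problem properly on $L$ itself. This shows $L$ is semi-free; since $L$ is also projective as a closed subgroup of $F$, it is free of rank $\aleph_0$ by the criterion of \cite{DanandLior}. If you want to complete a proof, that is the route to take; as written, your proposal has an acknowledged but unfilled gap at its load-bearing step, and the chosen tools are known to be insufficient to fill it in general.
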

In particular, weakly maximal subgroups are free.

In this work we consider an analog of the above for profinite surface groups. These groups show up as \'{e}tale fundamental groups of curves over an algebraically closed field of characteristic 0. 

We will be interested in semi-free profinite subgroups (of profinite surface groups), a notion introduced in \cite{DanandLior}, where it is shown that a group is free profinite if and only if it is projective and semi-free. 
As shown in \cite{Zal}, projectivity of a subgroup of a profinite surface group is equivalent to a simple condition on its index (as a supernatural number).
Henceforth, we will focus on semi-freeness.

Our main result is the following.

\begin{theorem}

Let $N \lhd_c \Gamma_g$ be a normal subgroup of infinite index in a profinite surface group of genus $g \geq 2$. Then there exists a semi-free profinite subgroup $M \lhd_c \Gamma_g$ that contains $N$.

\end{theorem}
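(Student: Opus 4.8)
The plan is to present the surface group by its one-relator presentation and pass to the free case. Let $\hat F$ be the free profinite group of rank $2g$ on $a_1,b_1,\dots,a_g,b_g$, set $r:=\prod_{i=1}^{g}[a_i,b_i]$, and let $\pi\colon\hat F\twoheadrightarrow \Gamma_g=\hat F/\langle\langle r\rangle\rangle$ be the quotient map. I would lift $N$ to $\hat F$, enlarge it there to a free subgroup by invoking \thmref{Mark's theorem}, and then push the result back down. The subgroup so obtained will be a quotient of a free profinite group of countable rank by the normal closure of the single ``spacious'' word $r$, and the hypothesis $g\ge 2$ is precisely what makes such a quotient semi-free.

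Concretely, put $\widetilde N:=\pi^{-1}(N)\lhd_c\hat F$. Then $r\in\langle\langle r\rangle\rangle=\ker\pi\subseteq\widetilde N$, while $\hat F/\widetilde N\cong\Gamma_g/N$ is infinite, so $\widetilde N$ has infinite index in $\hat F$. Applying \thmref{Mark's theorem} — in a form that keeps the produced overgroup normal, a refinement that one should be able to extract from Melnikov's description of the closed free normal subgroups of a free profinite group, as one is free to enlarge $\widetilde N$ within the closed normal subgroups of infinite index — one obtains $\widetilde N\le L\lhd_c\hat F$ with $L$ free profinite of rank $\aleph_0$. Set $M:=\pi(L)\lhd_c\Gamma_g$. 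Since $\ker\pi\subseteq\widetilde N\subseteq L$ we have $M\cong L/\langle\langle r\rangle\rangle$ and $\pi^{-1}(M)=L$; hence $M\lhd_c\Gamma_g$, $N\subseteq M$, and $\Gamma_g/M\cong\hat F/L$. This last group is infinite, because a closed subgroup of infinite rank cannot be open in a profinite group of finite rank (open subgroups of finitely generated profinite groups are finitely generated), so $L$, being free of rank $\aleph_0$, has infinite index in $\hat F$. Thus $M$ has infinite index in $\Gamma_g$, as required.

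It remains to prove that $M$ is semi-free, and this is the heart of the matter. View $M=L/R$ with $R:=\langle\langle r\rangle\rangle$ now regarded as a closed normal subgroup of the rank-$\aleph_0$ free profinite group $L$; as such it is generated, as a closed normal subgroup of $L$, by countably many $\hat F$-conjugates of the single word $r$. Let $(\mu\colon M\twoheadrightarrow A,\ \beta\colon B\twoheadrightarrow A)$ be a finite split embedding problem for $M$ with $C:=\ker\beta\ne 1$, and write $B=A\ltimes C$. Inflating $\mu$ along $L\twoheadrightarrow M$ to $\widetilde\mu\colon L\twoheadrightarrow A$ gives a finite split embedding problem $(\widetilde\mu,\beta)$ for the free group $L$, which therefore has $\aleph_0$ independent proper solutions $\psi_j\colon L\twoheadrightarrow B$. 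The task is to extract $\aleph_0$ of them that kill $R$, i.e.\ factor through $M$. The point is that ``killing $R$'' is a tame constraint: any candidate $\psi\colon L\twoheadrightarrow B$ has finite image, so the $\hat F$-conjugacy class of $r$ takes only finitely many values modulo $\ker\psi$, and $\psi(R)=1$ becomes a finite system of equations in $B$; a Fox-calculus analysis of the single relator $r=\prod_{i=1}^{g}[a_i,b_i]$ then shows that, over the prime field of $C$, the locus of solutions annihilating those values has codimension bounded in terms of $C$ alone — and the inequality $2g-1\ge 2$, i.e.\ $g\ge 2$, furnishes the slack both to make the system consistent (the genus-$g$ relation imposes no obstruction to a split lift) and to keep the annihilating locus large. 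Hence the $\aleph_0$ independent solutions over $L$ can be chosen to descend to $M$, yielding $\aleph_0$ independent solutions of the original problem; since moreover $\mathrm{rank}(M)=\aleph_0$ ($M$ being a quotient of $L$ that already admits the maps just built), $M$ is semi-free.

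The step I expect to be the main obstacle is exactly this last one: controlling how the normal closure $R$ of a single conjugacy class sits inside $L$ and meets the solutions of embedding problems — in particular, tracking the $\hat F/L$-action on the conjugates of $r$ as one runs through the finite quotients of $L$, and verifying uniformly that annihilating them costs only a bounded amount of codimension. A secondary, more routine point is the normal strengthening of \thmref{Mark's theorem} used to produce $L$, which should be within reach of Melnikov's theorem on closed normal subgroups of free profinite groups.
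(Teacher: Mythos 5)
Your reduction to the free group $\hat F$ breaks down precisely in the case that carries the new content of the theorem. The statement reduces (via the Bary-Soroker--Stevenson--Zalesskii result cited in the introduction) to the case where $\Gamma_g/N$ is hereditarily just infinite, and there your $L$ is forced to equal $\widetilde N=\pi^{-1}(N)$: any closed normal subgroup of $\hat F$ strictly containing $\widetilde N$ (other than $\hat F$ itself) maps onto a nontrivial closed normal subgroup of the just infinite group $\hat F/\widetilde N\cong\Gamma_g/N$, hence is open in $\hat F$, hence finitely generated and certainly not free of rank $\aleph_0$. So the ``normal refinement'' of \thmref{Mark's theorem} you hope to extract would have to assert that $\widetilde N$ itself is free, and Melnikov's characterization shows this is false in general: $N$ is free in $\hat F$ only if for every prime $p$ and every open normal $M\lhd_o\hat F$ containing $\widetilde N$ there is $\widetilde N\le M'\lhd_o\hat F$ with $p\mid[M:M']$. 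Taking $N$ with $\Gamma_g/N$ an infinite hereditarily just infinite pro-$p$ group (compose $\Gamma_g\twoheadrightarrow\langle x_1,\dots,x_g\rangle$, killing the $y_i$, with a surjection onto, say, the Nottingham group) violates this for any prime $q\neq p$. For such $N$ no admissible $L$ exists, so the strategy cannot be repaired by a better citation; it is the wrong reduction.

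The second gap is the semi-freeness of $M=L/\langle\langle r\rangle\rangle$ even when $L$ exists. Inside $L$ the subgroup $R$ is the normal closure of infinitely many $L$-conjugacy classes (indexed by the infinite quotient $\hat F/L$), and the assertion that $\psi(R)=1$ is ``a finite system of equations of bounded codimension'' is a heuristic, not an argument: no Fox-calculus computation is carried out, no uniformity over the finite quotients of $L$ is established, and it is not true in general that the quotient of a rank-$\aleph_0$ free profinite group by the normal closure of a single $\hat F$-conjugacy class is semi-free. The paper avoids both problems by staying inside surface groups: in the hereditarily just infinite case it proves $N$ itself is semi-free, by extending a given finite split embedding problem to an open subgroup $F\supseteq N$ (a surface group of large genus), building a surface basis adapted to $\ker\beta$ via \lemref{Lior and Pavel lemma}, and, when the relevant basis elements fail to lie in $N$, using just-infiniteness to realize the image of $N$ as an open subgroup of a free product $\Gamma_{h-m}\amalg G$, where Kurosh's theorem and \corref{free product lemma} conclude. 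The surface-group structure exploited there is exactly what your passage to $\hat F$ discards.
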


This answers a question raised by Bary-Soroker, Stevenson, and Zalesskii (in \cite[Remark 4.1]{LiorandPavel}) who used their diamond theorem to establish the special case where $\Gamma_g/N$ is not hereditarily just infinite.

Our method also gives the following analog of the aforementioned results of Shusterman. 

\begin{theorem}

Weakly maximal subgroups of profinite surface groups are semi-free profinite.

\end{theorem}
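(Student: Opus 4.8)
The plan is to reduce to our main theorem on normal subgroups (that every closed normal subgroup of $\Gamma_g$ of infinite index lies in a semi-free closed normal subgroup) and let weak maximality do the rest. Write $\Gamma := \Gamma_g$ with $g \geq 2$ (the genus $\leq 1$ cases being immediate). The one structural input we need is that semi-freeness forces infinite index in $\Gamma$: a semi-free profinite group has infinite rank, whereas every open subgroup of $\Gamma$ is again a profinite surface group of genus $\geq 2$ (by Riemann--Hurwitz) and hence finitely generated; so every semi-free closed subgroup of $\Gamma$ has infinite index. Consequently, if $H \leq_c \Gamma$ is weakly maximal and $H \leq M \leq_c \Gamma$ with $M$ semi-free, then $M = H$ — otherwise $M$ properly contains $H$ and is therefore of finite index, a contradiction.

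This already settles the case $H \lhd_c \Gamma$: our main theorem applied to $N := H$ produces a semi-free $M \lhd_c \Gamma$ with $H \leq M$, whence $M = H$, so $H$ is semi-free.

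For general $H$, write $N := \bigcap_{\gamma \in \Gamma}\gamma H \gamma^{-1}$ for its normal core; since $N \leq H$ it has infinite index, so the main theorem furnishes a semi-free $M \lhd_c \Gamma$ with $N \leq M$ (automatically $[\Gamma : M] = \infty$). If $M \leq H$ then $M \leq N$, so $M = N$ and $N$ is already semi-free. Otherwise $HM$ is open in $\Gamma$ — again a surface group of genus $\geq 2$, in which $H$ remains weakly maximal (a proper closed overgroup of $H$ in $HM$ is one in $\Gamma$) and $M$ remains normal, semi-free and of infinite index — so we may pass to it and assume $\Gamma = HM$; then $K := H \cap M$ is normal in $H$ with $H/K \cong \Gamma/M$ finitely generated. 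A short rewriting of words in $H \cup K'$ as a product of $H$-conjugates of elements of $K'$ followed by an element of $H$, combined with Dedekind's modular law, shows that for every closed $K'$ with $K \subsetneq K' \leq M$ the $H$-invariant subgroup $\langle\, hK'h^{-1} : h \in H\,\rangle$ has finite index in $M$; thus $K$ is, in the appropriate $H$-equivariant sense, weakly maximal in the semi-free group $M$. One then hopes to conclude with the permanence properties of semi-freeness from \cite{DanandLior} (together with the method of \cite{Mark} run inside $M$): open subgroups of semi-free groups are semi-free; so are the (relatively) weakly maximal subgroups isolated above; and a profinite group with a semi-free closed normal subgroup of full infinite rank is semi-free.

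I expect the main obstacle to lie exactly in this last chain — in particular in transferring semi-freeness up the extension $1 \to K \to H \to \Gamma/M \to 1$, which is entangled with showing that $H$ has infinite rank at all (a priori a weakly maximal subgroup need not visibly be infinitely generated). The more robust route, and probably the intended one, is to bypass this reduction and re-run the embedding-problem construction underlying the main theorem with $H$ itself as the anchor: to solve a finite split embedding problem for $H$ with many independent solutions, one enlarges it along finite-index subgroups $\langle H, \gamma\rangle$ of $\Gamma$ — all of finite index by weak maximality — solves ambiently using the surface-group machinery, and pushes the solutions back into $H$, the point being that weak maximality is precisely what guarantees the process never escapes $H$. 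Either way, the heart of the matter is controlling embedding problems for the non-open subgroup $H$ using only the crude fact that it is maximal among closed subgroups of infinite index.
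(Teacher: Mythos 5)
There is a genuine gap. Your primary route --- reducing to the main theorem via the normal core $N = \bigcap_{\gamma} \gamma H \gamma^{-1}$ --- does not close. The main theorem only produces a semi-free normal $M$ containing $N$, and this says nothing about $H$ itself: the core may be trivial (or tiny), in which case $M = N$ being semi-free, or $M$ being some normal subgroup unrelated to $H$, gives no handle on $H$. Your attempted rescue via the extension $1 \to K \to H \to \Gamma/M \to 1$ with $K = H \cap M$ rests on a chain of permanence properties of semi-freeness (that ``relatively weakly maximal'' subgroups of a semi-free group are semi-free, and that semi-freeness ascends along such extensions) that you do not prove and that are not in \cite{DanandLior}; as you yourself note, this is entangled with showing $H$ has infinite rank at all, which is part of what must be established. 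The argument that works in the normal case ($H \lhd_c \Gamma$ weakly maximal $\Rightarrow$ the semi-free $M \supseteq H$ from the main theorem must equal $H$) is correct but covers only that special case.

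Your second, sketched route is the right one, and is what the paper does: the proof of the theorem for $N \lhd_c \Gamma_g$ with $\Gamma_g/N$ hereditarily just infinite is repeated verbatim with $H$ in place of $N$. Concretely: extend the finite split embedding problem to an open subgroup $H \leq F \leq_o \Gamma_g$ of large genus $h$, apply \lemref{Lior and Pavel lemma} to get a proper ambient solution and a surface basis with $x_jx_1^{-1}, y_jy_1^{-1} \in \Ker(\beta)$ for $j \leq m$; if all $x_ix_1^{-1}$ lie in $H$ one is done by that lemma, and otherwise $\beta$ factors through $\psi \colon F \to \Gamma_{h-m} \amalg G$. The hypothesis on $H$ is used at exactly one point: $\psi^{-1}(\psi(H)) = H\Ker(\psi)$ is a closed subgroup of $\Gamma_g$ properly containing $H$ (since some $x_ix_1^{-1} \in \Ker(\psi) \setminus H$), hence open by weak maximality --- this replaces the just-infinite argument. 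Then $\psi(H)$ is open in $\Gamma_{h-m} \amalg G$, Kurosh gives $\psi(H) \cong \Gamma_t \amalg \overline{G}$ with $t$ large, and \corref{free product lemma} finishes. Your sketch of ``enlarging along the subgroups $\langle H, \gamma \rangle$'' is not the mechanism actually needed, and none of the above details appear in your proposal, so as written it is not a proof.
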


Weakly maximal subgroups were also studied in the context of branch groups, for instance in \cite{Grigorchuk}.

\section{Preliminaries}

Here we give the basic definitions and claims that will be used in the rest of this paper. We will work in the category of profinite groups, namely we assume that every subgroup is closed, every homomorphism is continuous, and so on.

\begin{definition}
For a finitely generated profinite group $G$ we denote by $d(G)$ the minimal size of a generating set of $G$.
\end{definition}

\begin{definition}
An infinite profinite group $G$ is called \textbf{just infinite} if for every $\{1\} \neq N \lhd_c G$ the quotient $G/N$ is finite. Equivalently, every non trivial normal subgroup of $G$ is open.
\end{definition}

\begin{definition}
An infinite profinite group is called \textbf{hereditarily just infinite} if every open normal subgroup of it is just infinite.
\end{definition}

\begin{definition}
Let $H$ be a closed subgroup of infinite index in a profinite group $G$. We say that $H$ is \textbf{weakly maximal} in $G$ if every $H \lneq K \leq_c G$ is open.
\end{definition}


\begin{definition}
Given groups $G, A, B$ and surjective homomorphisms $$\alpha: A \to B,  \beta: G \to B$$ we define the \textbf{embedding problem} $\mathcal{E}(G,A,B,\alpha,\beta)$ as the problem of finding a homomorphism $\varphi: G \to A$ such that $\beta = \alpha \circ \varphi$. Such a homomorphism $\varphi$ is called a \textbf{solution} to the problem. If moreover $\varphi$ is surjective then it is called a \textbf{proper solution}.
\end{definition}

\begin{definition}
An embedding problem $\mathcal{E}(G,A,B,\alpha,\beta)$ is called \textbf{finite} if $A$ is finite. Note that since $\alpha$ is surjective, $B$ is also finite.
\end{definition}

\begin{definition}
An embedding problem $\mathcal{E}(G,A,B,\alpha,\beta)$ is called \textbf{split} if there is a homomorphism $\gamma: B \to A$ such that $\alpha \circ \gamma = \mathrm{id}_B$.
In such a case we have $A \cong \Ker(\alpha) \rtimes B.$
\end{definition}


\begin{definition}
A profinite group $G$ of rank $\aleph_0$ is called \textbf{semi-free} if every finite split embedding problem $\mathcal{E}(G,A,B,\alpha,\beta)$ has a proper solution.
\end{definition}

\begin{definition}
The \textbf{profinite surface group of genus $g$} is the group given by the profinite presentation $$\Gamma_g = \langle x_1,\dots,x_g,y_1,\dots,y_g \ | \  \prod_{i=1}^g [x_i,y_i] = 1 \rangle.$$
\end{definition}

\begin{definition}
Let $\Gamma_g$ be a profinite surface group of genus $g$. A \textbf{surface basis} of $\Gamma_g$ is a set of generators $x_1,\dots,x_g,y_1,\dots,y_g$
such that $$\Gamma_g \cong \langle x_1,\dots,x_g,y_1,\dots,y_g \ | \  \prod_{i=1}^g [x_i,y_i] = 1 \rangle.$$
\end{definition}

\begin{fact} \label{surface subgroup}
An open subgroup $H$ of a genus $g$ profinite surface group $\Gamma$ is a profinite surface group of genus $[\Gamma : H](g-1)+1$.
\end{fact}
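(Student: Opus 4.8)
The plan is to reduce to the classical statement that a finite-index subgroup of a discrete surface group is again a surface group, and then transfer the conclusion back to the profinite setting. Let $S_g$ be the \emph{discrete} surface group of genus $g$, i.e.\ the fundamental group of the closed orientable surface $\Sigma_g$, given by the ordinary (discrete) presentation $\langle x_1,\dots,x_g,y_1,\dots,y_g \mid \prod_{i=1}^g[x_i,y_i]\rangle$. Since profinite completion is compatible with presentations, $\widehat{S_g}\cong \hat F_{2g}/\overline{\langle\langle \prod_i[x_i,y_i]\rangle\rangle}\cong\Gamma_g$, and since $S_g$ is residually finite the canonical map $S_g\to\Gamma_g$ is injective with dense image. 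I would begin by recording the standard bijection between open subgroups of $\Gamma_g$ of index $n$ and subgroups of $S_g$ of index $n$: given an open $H\leq\Gamma_g$ with $[\Gamma_g:H]=n$, the subgroup $K\defeq H\cap S_g$ has index $n$ in $S_g$ (as $S_g$ is dense and $H$ is open), and $\overline{K}=H$ (as $K$ is dense in the open, hence closed, set $H$).

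Next I would apply covering space theory to $K\leq S_g$: the index-$n$ subgroup $K$ is the fundamental group of the connected $n$-sheeted cover of $\Sigma_g$ that corresponds to it, and this cover is again a closed orientable surface $\Sigma_h$, whose genus is determined by multiplicativity of the Euler characteristic via $2-2h = n(2-2g)$, that is $h = n(g-1)+1 = [\Gamma_g:H](g-1)+1$. Hence $K\cong S_h$ as abstract groups.

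It then remains to verify that $H=\overline{K}$ is the \emph{full} profinite completion of $K$, for then $H\cong\widehat{K}\cong\widehat{S_h}\cong\Gamma_h$ as required. Equivalently, one must check that the subspace topology that $\Gamma_g$ induces on $K$ agrees with the full profinite topology of $K$: the former has a neighbourhood basis at $1$ consisting of the subgroups $K\cap V$ with $V\lhd S_g$ of finite index, while the latter has all finite-index subgroups $W\leq K$ as a basis; since $[S_g:K]=n<\infty$, each such $V$ meets $K$ in a finite-index subgroup of $K$, while conversely each $W$ contains its $S_g$-normal core $\mathrm{Core}_{S_g}(W)$, which is a finite-index normal subgroup of $S_g$ contained in $K$. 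Thus the two topologies coincide and the proof concludes. I expect this comparison of topologies to be the only genuinely delicate point; everything else is the classical surface-subgroup theorem together with the routine formalism of profinite completions. (One could instead argue purely inside the profinite category: the preimage of $H$ in $\hat F_{2g}$ is free profinite of rank $n(2g-1)+1$ by the profinite Nielsen--Schreier theorem, and $H$ is the quotient of it by the closed normal closure of $n$ suitable conjugates of the surface relator; recognizing this presentation, after a change of basis, as a genus-$h$ surface presentation is exactly the combinatorial content of the classical theorem and is considerably more laborious.)
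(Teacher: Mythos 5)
Your argument is correct, and there is nothing in the paper to compare it against: the paper records this statement as a \emph{Fact} with no proof, treating it as standard background. What you have written is essentially the standard justification. The chain of reductions is sound: the discrete surface group $S_g$ is residually finite and has $\Gamma_g$ as its profinite completion; open subgroups of $\Gamma_g$ of index $n$ correspond to index-$n$ subgroups $K=H\cap S_g$ of $S_g$ with $\overline{K}=H$; covering space theory and multiplicativity of the Euler characteristic give $K\cong S_h$ with $h=n(g-1)+1$; and the topology comparison (finite-index subgroups of $K$ contain their $S_g$-normal cores, which have finite index in $S_g$ because $[S_g:K]<\infty$) shows the induced topology on $K$ is its full profinite topology, so $H=\overline{K}\cong\widehat{K}\cong\widehat{S_h}\cong\Gamma_h$. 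You correctly isolate the topology comparison as the one genuinely delicate point. The only inputs you use without proof are themselves classical: residual finiteness of surface groups, compatibility of profinite completion with finite presentations, and the bijection between open subgroups of $\widehat{G}$ and finite-index subgroups of $G$ together with $\overline{K}\cong\widehat{K}$ when the induced and full profinite topologies agree; citing these is entirely appropriate for a statement the authors themselves label a fact.
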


\begin{claim} \label{onto kernel claim}
Let $\mathcal{E}(G,A,B,\alpha,\beta)$ be an embedding problem and let $\varphi: G \to A$ be a solution of $\mathcal{E}$. If $\Ker(\alpha) \subseteq \Imm(\varphi)$ then $\varphi$ is a proper solution.
\end{claim}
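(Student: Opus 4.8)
The plan is a direct diagram chase that uses nothing beyond the defining identity $\beta = \alpha \circ \varphi$, the surjectivity of $\alpha$ and $\beta$, and the hypothesis $\Ker(\alpha) \subseteq \Imm(\varphi)$. Concretely, I want to show $\Imm(\varphi) = A$, so I would fix an arbitrary $a \in A$ and manufacture an element of $G$ mapping to it under $\varphi$.

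First I would push $a$ down to $B$ by setting $b \defeq \alpha(a)$, and invoke surjectivity of $\beta$ to choose $g \in G$ with $\beta(g) = b$. Then $\alpha(\varphi(g)) = \beta(g) = b = \alpha(a)$, so $a\,\varphi(g)^{-1} \in \Ker(\alpha)$. By the hypothesis $\Ker(\alpha) \subseteq \Imm(\varphi)$ there is some $h \in G$ with $\varphi(h) = a\,\varphi(g)^{-1}$, and then $\varphi(hg) = \varphi(h)\varphi(g) = a$. Hence $a \in \Imm(\varphi)$; since $a$ was arbitrary, $\varphi$ is surjective, i.e.\ a proper solution of $\mathcal{E}$.

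I do not anticipate any real obstacle: this is essentially the elementary observation that a homomorphism which hits the kernel of $\alpha$ and hits everything modulo that kernel (the latter guaranteed by surjectivity of $\beta$) must be onto. The only point worth a remark is the profinite setting, but it causes no trouble here, since the argument exhibits each element of $A$ literally in the image of $\varphi$ (one could alternatively note that $\Imm(\varphi)$ is closed, $G$ being compact). So the whole proof comes down to these few lines.
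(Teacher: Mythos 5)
Your argument is correct and is essentially identical to the paper's own proof: both fix $a \in A$, lift $\alpha(a)$ through $\beta$ to some $g \in G$, observe $a\varphi(g)^{-1} \in \Ker(\alpha) \subseteq \Imm(\varphi)$, and conclude $a \in \Imm(\varphi)$. Your version merely makes the final step explicit by exhibiting the preimage $hg$, where the paper just cites that $\Imm(\varphi)$ is a subgroup.
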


\begin{proof}
Let $a \in A$. Denote $b = \alpha(a)$ and let $g \in G$ be such that $\beta(g) = b$. Then $$\alpha(a) = \beta(g) = \alpha(\varphi(g)),$$ therefore $$a\varphi(g)^{-1} \in \Ker(\alpha) \subseteq \Imm(\varphi).$$ 
As $\Imm(\varphi)$ is a subgroup, we conclude that $a \in \Imm(\varphi)$ so $\Imm(\varphi) = A$.
\end{proof}

\section{Semi-free subgroup}

\if 0
Let $\Gamma_g$ be the profinite surface group of genus $g$. In \cite[Theorem 1.4]{Lior and Pavel} it is proven that if $N \lhd_c \Gamma_g$ is such that $\Gamma_g/N$ is not hereditarily just infinite, then there is a closed normal semi-free subgroup $M$ of $\Gamma_g$ such that $N \leq M$. Next we shall prove that if $\Gamma_g/N$ is hereditarily just infinite then $N$ is itself semi-free, thus extending the theorem for all normal subgroups of $\Gamma_g$.
\fi

We need the following variant of \cite[Lemma 6.1]{Pavel}.

\begin{lemma} \label{Pavel lemma}
Let $$\Gamma = \langle x_i, y_i \mid \prod_{i=1}^g [x_i,y_i] = 1 \rangle$$ be a profinite surface group, and let $N \leq_c \Gamma$. Consider the diagram
$$\begin{tikzpicture}[scale=1.5]
\node (A) at (1,2) {$\Gamma$};
\node (B) at (1,1) {$N$};
\node (C) at (0,0) {$A$};
\node (D) at (1,0) {$B$};
\path[->,font=\scriptsize,>=angle 90]
(B) edge node[right]{$\beta$} (D)
(C) edge node[above]{$\alpha$} (D)
(A) edge[bend left] node[right]{$\bar{\beta}$} (D);
\end{tikzpicture}$$
where $A,B$ are finite groups, $\alpha$ is a surjection, and  $\bar{\beta}_{|N}= \beta$ is a surjection as well. Denote by $$\mathcal{E} = \mathcal{E}(N,A,B,\alpha,\beta), \quad \bar{\mathcal{E}} = \mathcal{E}(\Gamma,A,B,\alpha,\bar{\beta})$$ the two finite embedding problems in the above diagram.

Let $\varphi \colon \Gamma \to A$ be a solution to $\bar{\mathcal{E}}$, and set $$K := \Ker(\alpha), \quad n := |K\varphi(\Gamma)|, \quad s := d(K).$$ Suppose that $g \geq sn+s$ and that $$\forall \  1 \leq i,j \leq sn+s \ \varphi(x_i) = \varphi(x_j), \  \varphi(y_i) = \varphi(y_j),\  x_ix_1^{-1} \in N.$$ Then ${\mathcal{E}}$ admits a proper solution.
\end{lemma}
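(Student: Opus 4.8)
The plan is to lift $\bar\beta$ to a homomorphism $\Gamma\to A$ that is onto when restricted to $N$, and then restrict. So I would first reduce the lemma to the claim that \emph{there exists $\bar\psi\colon\Gamma\to A$ with $\alpha\circ\bar\psi=\bar\beta$ and $\bar\psi(N)=A$}: for such a $\bar\psi$ the restriction $\bar\psi_{|N}$ satisfies $\alpha\circ\bar\psi_{|N}=\bar\beta_{|N}=\beta$ and is onto, hence is a proper solution of $\mathcal E$. Writing $t_i:=x_ix_1^{-1}$, the hypotheses give $t_i\in N$ and $\varphi(t_i)=1$ for $1\le i\le sn+s$, so $\bar\beta(t_i)=\alpha(\varphi(t_i))=1$; hence any lift $\bar\psi$ of $\bar\beta$ satisfies $\bar\psi(t_i)\in K\cap\bar\psi(N)$. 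Since $\alpha(\bar\psi(N))=\beta(N)=B$, it then suffices to arrange $\langle\bar\psi(t_2),\dots,\bar\psi(t_{s+1})\rangle=K$, for then $\bar\psi(N)\supseteq K$ surjects onto $B$ and must equal $A$.

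Next I would parametrise the lifts of $\bar\beta$: giving $\bar\psi\colon\Gamma\to A$ with $\alpha\circ\bar\psi=\bar\beta$ is the same as choosing $a_i\in\varphi(x_i)K$ and $b_i\in\varphi(y_i)K$ ($1\le i\le g$) with $\prod_{i=1}^g[a_i,b_i]=1$, and then $\bar\psi(t_i)=a_ia_1^{-1}$. Put $a_1:=\varphi(x_1)$. Because $\varphi(x_i)=\varphi(x_1)$ for $i\le sn+s$ and $K$ is normal, $a_ia_1^{-1}$ runs over all of $K$ as $a_i$ runs over the coset $\varphi(x_1)K$; so I can fix $a_2,\dots,a_{s+1}$ so that $a_2a_1^{-1},\dots,a_{s+1}a_1^{-1}$ generate $K$ (possible since $d(K)=s$), which secures the surjectivity requirement. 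What remains is to choose the still-free data — the $b_i$ for all $i$ and the $a_i$ for $i>s+1$, each confined to a coset of $K$ — so that the relation $\prod_{i=1}^g[a_i,b_i]=1$ holds.

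Finally, to satisfy the relation I would start from the ``default'' values $a_i=\varphi(x_i)$, $b_i=\varphi(y_i)$ and let $y_0:=\prod_{i=1}^g[a_i,b_i]$. Applying $\alpha$ and using $\prod_i[x_i,y_i]=1$ gives $y_0\in K$; and in $A/[K,A]$, where the images of the inserted elements $a_ia_1^{-1}\in K$ ($2\le i\le s+1$) are central and so fall out of their commutators, $y_0$ maps to $\prod_{i=1}^g[\varphi(x_i),\varphi(y_i)]=1$ (using once more that $\varphi(x_i)=\varphi(x_1)$ for $i\le sn+s$), so in fact $y_0\in[K,A]$. It then remains to see that, moving the still-free data within its cosets, the product $\prod_i[a_i,b_i]$ — which always lies in $[K,A]$ — can be made to fill all of $[K,A]$, and in particular to equal $1$. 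This is a statement internal to the finite group $A$: perturbing a single $a_i$ (resp.\ $b_i$) multiplies the product by a conjugate of an element of $[K,\langle\varphi(y_i)\rangle]$ (resp.\ $[K,\langle\varphi(x_i)\rangle]$); since $\varphi(\Gamma)K=A$ (as $\bar\beta=\alpha\circ\varphi$ is onto $B$), these subgroups together with $[K,K]$ generate $[K,A]$; and the bound $g\ge sn+s$ provides enough pairs of generators — roughly $n=|A|$ of them, all sharing a common $\varphi$-image, which is exactly what the hypothesis $\varphi(x_i)=\varphi(x_j)$, $\varphi(y_i)=\varphi(y_j)$ delivers, to absorb each of the $s$ generators of $K$ inserted above — to realise $y_0^{-1}$ as the required product of such conjugates. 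I expect this last step — a relative version of the fact that a product of enough commutators with entries ranging over prescribed cosets covers the relevant subgroup — to be the main obstacle, and the point where the genus bound $sn+s$ is consumed. Granting it, the homomorphism $\bar\psi$ built this way is as required.
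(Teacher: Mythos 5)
Your reduction is the right one, and it matches the paper's: it suffices to build a lift $\bar\psi$ of $\bar\beta$ under which the elements $x_ix_1^{-1}\in N$ (whose $\varphi$-images are trivial) hit a generating set of $K$, since then $\bar\psi(N)\supseteq K$ and \clmref{onto kernel claim} finishes. The genuine gap is precisely the step you flag as the main obstacle: you never prove that the remaining free choices of $a_i\in\varphi(x_i)K$, $b_i\in\varphi(y_i)K$ can be made so that $\prod_{i=1}^g[a_i,b_i]=1$. As you have set it up, this is a commutator-width problem --- writing a prescribed element of $[K,A]$ as a product of boundedly many conjugates of commutators with entries confined to given cosets --- and knowing that the relevant commutators \emph{generate} $[K,A]$ says nothing about how many factors are needed; it is not at all clear that $g\ge sn+s$ pairs suffice for a general filling lemma of this kind. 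So the argument is incomplete exactly at the point where the genus hypothesis must be spent.

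The paper avoids the filling problem altogether with an order trick, which is the one idea missing from your sketch. Rather than inserting a generator $k_{i+1}$ of $K$ into a single position and then trying to cancel the resulting defect, insert it into a whole block of $n$ consecutive positions: set $\eta(x_{in+j+1})=\varphi(x_1)k_{i+1}$ for $1\le j\le n$ and let $\eta$ agree with $\varphi$ on all other generators (in particular on every $y_i$). Within each block all $n$ commutators are equal to the single element $[\varphi(x_1)k_{i+1},\varphi(y_1)]$, which lies in the subgroup $K\varphi(\Gamma)$ of order $n$, so the block contributes an $n$-th power, namely $1$; the corresponding unmodified block contributes $[\varphi(x_1),\varphi(y_1)]^n=1$ for the same reason. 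Hence $\prod_i[\eta(x_i),\eta(y_i)]=\prod_i[\varphi(x_i),\varphi(y_i)]=1$ with no defect to cancel, $\eta$ is a homomorphism lifting $\bar\beta$, and $\eta(N)$ contains each $k_{i+1}$ (up to conjugation by $\varphi(x_1)$, which preserves $K$), so $\eta(N)\supseteq K$. This is where the $sn+s$ indices with equal $\varphi$-images are consumed. Your parametrization of lifts is fine as a framework, but without this device, or a proved substitute for your coverage claim, the proof does not close.
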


\begin{proof}
Choose a set of generators $B := \{k_1, \dots, k_s\}$ of $K$. Define $\eta(x_1) = \varphi(x_1)$ and
$$\eta(x_{in+j+1}) = \varphi(x_{sn+i+1})k_{i+1} = \varphi(x_1)k_{i+1}, \quad 0 \leq i \leq s-1, \ 1 \leq j \leq n.$$
Let $\eta$ coincide with $\varphi$ for all other generators of $\Gamma$, that is
$$\eta(x_i) = \varphi(x_i),\  \eta(y_j) = \varphi(y_j),\  sn + 1 < i \leq g,\  1 \leq j \leq g.$$

Since 
$$\forall\  0 \leq i \leq s-1 \ [\varphi(x_{sn+i+1})k_{i+1}, \varphi(y_{sn+i+1})] \in K\varphi(\Gamma)$$
we get that
$$\forall\  0 \leq i \leq s-1 \ [\eta(x_{in+2}), \eta(y_{in+2})]^n = 1 = [\varphi(x_{in+2}), \varphi(y_{in+2})]^n.$$
Therefore
\begin{equation*}
\begin{split}
&\prod_{i=1}^g [\eta(x_i), \eta(y_i)] = \\
&[\eta(x_1),\eta(y_1)] \cdot \prod_{i=0}^{s-1} [\eta(x_{in+2}), \eta(y_{in+2})]^n \cdot \prod_{i=sn+2}^g [\eta(x_i), \eta(y_i)] =\\
&[\varphi(x_1),\varphi(y_1)] \cdot \prod_{i=0}^{s-1} [\varphi(x_{in+2}), \varphi(y_{in+2})]^n \cdot \prod_{i=sn+2}^g [\varphi(x_i), \varphi(y_i)] =\\
&\prod_{i=1}^g [\varphi(x_i), \varphi(y_i)] = 1.
\end{split}
\end{equation*}
Thus $\eta$ extends to a homomorphism. As $x_1^{-1}x_i \in N$ for $1 \leq i \leq sn+s$, we conclude that $k_{j+1} = \eta(x_1^{-1}x_{jn+2}) \in \eta(N)$ for $0 \leq j \leq s-1$, hence $\eta(N)$ contains $K$, so  the result follows by invoking \clmref{onto kernel claim}.
\end{proof}

We also need the following generalization of \cite[Lemma 2.2]{LiorandPavel}.

\if 0
\cite[Lemma 2.2]{LiorandPavel} shows that the finite split embedding problem
$$\begin{tikzpicture}[scale=1.5]
\node (B) at (1,1) {$\Gamma_g$};
\node (C) at (0,0) {$A$};
\node (D) at (1,0) {$B$};
\path[->,font=\scriptsize,>=angle 90]
(B) edge node[right]{$\beta$} (D)
(C) edge node[above]{$\alpha$} (D);
\end{tikzpicture}$$
is properly solvable once $g \geq 2|A|^3$. For our purposes we will need the following formulation of that lemma:
\fi

\begin{lemma} \label{Lior and Pavel lemma}
Let $\Gamma_g$ be a profinite surface group of genus $g$, let $$\mathcal{E} = \mathcal{E}(\Gamma_g,A,B,\alpha,\beta)$$ be a finite split embedding problem, and suppose that $g \geq 2|A|^2|B|$. Then $\mathcal{E}$ has a proper solution, and $\Gamma_g$ has a surface basis $$x_1,\dots,x_g,y_1,\dots,y_g$$ such that
$$\forall \ 1 \leq i \leq m \ \ x_ix_1^{-1}, y_iy_1^{-1} \in \Ker(\beta),\quad m = \frac{2|A|^2}{|B|}.$$

Furthermore, if $N \leq_c \Gamma_g$ is such that $\beta(N) = B$ and $$\forall \  1 \leq i \leq m \ \ x_ix_1^{-1} \in N,$$ then the embedding problem $\bar{\mathcal{E}} = \mathcal{E}(N,A,B,\alpha,\beta_{|N})$ is properly solvable.
\end{lemma}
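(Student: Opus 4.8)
The plan is to isolate the one genuinely geometric step — a good change of surface basis adapted to $\beta$ — and then to obtain everything else by running a single explicit solution through \lemref{Pavel lemma} twice. I first claim it is enough to produce a surface basis $x_1,\dots,x_g,y_1,\dots,y_g$ of $\Gamma_g$ with $\beta(x_i)=\beta(y_i)=1$ for all $1\le i\le m$ (note $m=2|A|^2/|B|$ is an integer, as $|B|$ divides $|A|$); this is stronger than the required $x_ix_1^{-1},\,y_iy_1^{-1}\in\Ker(\beta)$, since it forces $\beta(x_1)=\beta(y_1)=1$. Granting such a basis, use that $\mathcal{E}$ is split: fix a section $\gamma\colon B\to A$ with $\alpha\circ\gamma=\mathrm{id}_B$, put $K:=\Ker(\alpha)$, and set $\varphi:=\gamma\circ\beta\colon\Gamma_g\to A$. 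Then $\alpha\circ\varphi=\beta$, so $\varphi$ solves $\mathcal{E}(\Gamma_g,A,B,\alpha,\beta)$; moreover $\varphi(\Gamma_g)=\gamma(B)$, and since $K\cap\gamma(B)=\{1\}$ one has $K\varphi(\Gamma_g)=A$, so $n:=|K\varphi(\Gamma_g)|=|A|$. Put $s:=d(K)$.

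The next step is the bookkeeping forced by $g\ge 2|A|^2|B|$. Since $|K|=|A|/|B|$ we have $s\le\log_2(|A|/|B|)$ (with $s=0$ if $K=\{1\}$), so a direct estimate using $|B|\le|A|$ gives $sn+s=s(|A|+1)\le 2|A|^2/|B|=m\le 2|A|^2\le g$. Now I would apply \lemref{Pavel lemma} with this surface basis, with $\bar\beta=\beta$, the solution $\varphi$, and $N=\Gamma_g$: its hypotheses hold because for $1\le i,j\le sn+s\le m$ we have $\varphi(x_i)=\varphi(x_j)$ and $\varphi(y_i)=\varphi(y_j)$ (all equal to $1$), while $x_ix_1^{-1}\in\Gamma_g$ and $g\ge sn+s$; this produces a proper solution of $\mathcal{E}$. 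For the final clause, given $N\leq_c\Gamma_g$ with $\beta(N)=B$ and $x_ix_1^{-1}\in N$ for $1\le i\le m$, I would apply \lemref{Pavel lemma} once more with the same basis and the same $\varphi$ but this $N$: here $\bar\beta_{|N}=\beta_{|N}$ surjects onto $B$ by hypothesis, the three pointwise conditions hold for $1\le i,j\le sn+s\le m$ exactly as before, and $g\ge sn+s$; hence $\bar{\mathcal{E}}=\mathcal{E}(N,A,B,\alpha,\beta_{|N})$ is properly solvable.

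It remains to construct the surface basis, and this is where I expect the real work — and the main obstacle — to lie. The mechanism, which is also the engine behind \cite[Lemma 2.2]{LiorandPavel}, is that a surjection from a surface group of large genus onto a fixed finite group can be normalized: by an automorphism of $\Gamma_g$ one may arrange that $\beta$ factors through the quotient map $\Gamma_g\to\Gamma_{g_0}$ that kills all but the first $g_0$ handles, where $g_0=g_0(B)$ depends only on $B$. (For abelian $B$ this reduces to a symplectic normal form over $\mathbb{Z}/\exp(B)\mathbb{Z}$, with the mapping class group acting through $\mathrm{Sp}_{2g}(\mathbb{Z})$; the general case rests on the analogous stable behaviour of $\mathrm{Aut}(\Gamma_g)$ acting on $\mathrm{Hom}(\Gamma_g,B)$, and here I would either quote or follow the proof of \cite[Lemma 2.2]{LiorandPavel}.) Then $\beta(x_i)=\beta(y_i)=1$ for all $g_0<i\le g$, and since $g-g_0\ge m$ — which follows comfortably from $g\ge 2|A|^2|B|$ — reindexing the handles so as to bring $m$ of these trivial ones to the front yields the desired basis. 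The delicate point is precisely this normalization: a change of surface basis must preserve the relation $\prod[x_i,y_i]=1$, so only the restricted repertoire of surface Nielsen moves (handle twists, handle slides, handle permutations) is available, and one must reach the normal form while keeping the number of handles that carry $\beta$ well below $g$; once that is in hand, the rest is the elementary accounting above together with the two applications of \lemref{Pavel lemma}.
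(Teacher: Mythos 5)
Your overall architecture --- set $\varphi=\gamma\circ\beta$ using the splitting, check $sn+s\le m\le g$, and feed a surface basis adapted to $\beta$ into \lemref{Pavel lemma} twice (once with $N=\Gamma_g$, once with the given $N$) --- is exactly the paper's, and your bookkeeping and both applications of \lemref{Pavel lemma} are correct. The genuine gap is the step you yourself flag as the main obstacle: you never construct the surface basis, and the route you sketch for it is both much stronger than necessary and unquantified. You propose to normalize $\beta$ by an automorphism of $\Gamma_g$ so that $\beta(x_i)=\beta(y_i)=1$ for all $i>g_0(B)$. That is a hard stabilization statement about the action of $\mathrm{Aut}(\Gamma_g)$ on $\mathrm{Hom}(\Gamma_g,B)$ (easy only in the abelian/symplectic case you mention); it is not what \cite[Lemma 2.2]{LiorandPavel} supplies, and even granting it as a black box you have no explicit bound on $g_0$ in terms of $|B|$, so the inequality $g-g_0\ge m$ does not ``follow comfortably'' from $g\ge 2|A|^2|B|$. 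As written, the proof of the one substantive claim of the lemma is missing.

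The paper avoids all of this with an elementary argument, and this is the real content of the lemma: one does not need $\beta$ to \emph{vanish} on $m$ handles, only to be \emph{constant} on them --- which is precisely the stated conclusion $x_ix_1^{-1},y_iy_1^{-1}\in\Ker(\beta)$ and precisely the hypothesis $\varphi(x_i)=\varphi(x_j)$, $\varphi(y_i)=\varphi(y_j)$ of \lemref{Pavel lemma}. Starting from any surface basis $z_1,\dots,z_g,w_1,\dots,w_g$, each pair $(\varphi(z_i),\varphi(w_i))$ takes at most $|B|^2$ values, so the hypothesis $g\ge m|B|^2=2|A|^2|B|$ and the pigeonhole principle produce indices $j_1<\dots<j_m$ on which $\varphi$ is constant. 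The identity
$$\prod_{i=1}^g[z_i,w_i]=[z_{j_1},w_{j_1}]\cdot\prod_{i=1}^{j_1-1}\bigl[z_i^{c},w_i^{c}\bigr]\cdot\prod_{i=j_1+1}^g[z_i,w_i],\qquad c=[z_{j_1},w_{j_1}],$$
shows that moving handle $j_1$ to the front and conjugating the earlier handles by $c$ yields another surface basis; iterating brings all $m$ selected handles (unconjugated) to the front, giving $x_i=z_{j_i}$ for $i\le m$ and hence the required basis. If you replace your normalization step by this pigeonhole-plus-handle-slide argument, the rest of your write-up goes through essentially verbatim.
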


\begin{proof}
Let $z_1,\dots,z_g,w_1,\dots,w_g$ be a surface basis of $\Gamma_g$. Let $\gamma$ be a section of $\alpha$ and set $\varphi = \gamma \circ \beta$. As before, put $K = \Ker(\alpha)$, $s = d(K)$, and $n = |K\varphi(\Gamma_g)| = |A|$. Note that $$s \leq |K| = \frac{|A|}{|B|}$$ and thus $$sn+s \leq \frac{|A|^2+|A|}{|B|} \leq \frac{2|A|^2}{|B|} = m.$$

Each of the pairs $(\varphi(z_i),\varphi(w_i))$ can attain at most $|B|^2$ values, whence by the pigeonhole principle (since $g \geq m|B|^2$) there are $$1 \leq j_1 < j_2 < \dots < j_m \leq g$$ such that $$\varphi(z_{j_1}) = \dots = \varphi(z_{j_m}),\quad \varphi(w_{j_1}) = \dots = \varphi(w_{j_m}).$$

Suppose $j_1 \neq 1$, then
\begin{equation*}
\begin{split}
1 = \prod_{i=1}^g [z_i,w_i] &=
[z_{j_1},w_{j_1}] \Bigg[ \prod_{i=1}^{j_1-1} [z_i,w_i] \Bigg]^{[z_{j_1}, w_{j_1}]}\prod_{i=j_1+1}^g [z_i,w_i]\\
&= [z_{j_1},w_{j_1}] \cdot \prod_{i=1}^{j_1-1} [z_i^{[z_{j_1}, w_{j_1}]},w_i^{[z_{j_1}, w_{j_1}]}] \cdot \prod_{i=j_1+1}^g [z_i,w_i]
\end{split}
\end{equation*}
and so we can replace $\{z_i,w_i\}_{i=1}^g$ with a new surface basis $$z_{j_1},z_1^{[z_{j_1}, w_{j_1}]},\dots,z_{j_1-1}^{[z_{j_1}, w_{j_1}]},z_{j_1+1},\dots,z_g,$$
$$w_{j_1},w_1^{[z_{j_1}, w_{j_1}]},\dots,w_{j_1-1}^{[z_{j_1}, w_{j_1}]},w_{j_1+1},\dots,w_g.$$
By repeating this process with $j_2,\dots,j_m$ we obtain a surface basis $x_1,\dots,x_g,y_1,\dots,y_g$ of $\Gamma_g$ such that $x_i = z_{j_i}, y_i = w_{j_i}$ for $1 \leq i \leq m$, and so $$\varphi(x_1) = \dots = \varphi(x_m),\quad \varphi(y_1) = \dots = \varphi(y_m).$$

Since $sn + s \leq m$ we can apply \lemref{Pavel lemma} (with $N = \Gamma_g$ if necessary) and the result follows.
\end{proof}

\begin{cor} \label{free product lemma}
The finite split embedding problem
$$\begin{tikzpicture}[scale=1.5]
\node (B) at (0,1) {$\Gamma_g \amalg G$};
\node (C) at (-1,0) {$K \rtimes H$};
\node (D) at (0,0) {$H$};
\node (E) at (-2,0) {$K$};
\node (F) at (-3,0) {$1$};
\node (G) at (1,0) {$1$};
\path[->,font=\scriptsize,>=angle 90]
(B) edge node[right]{$\beta$} (D)
(C) edge node[above]{} (D)
(D) edge node[above]{$$} (G)
(E) edge node[above]{$$} (C)
(F) edge node[above]{$$} (E);
\end{tikzpicture}$$
has a proper solution once $g \geq 2|K|^2|H|^3$, $\Gamma_g$ is a profinite surface group of genus $g$, and $G$ is any profinite group.
\end{cor}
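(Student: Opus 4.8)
The plan is to exploit the universal property of the free profinite product: a (continuous) homomorphism out of $\Gamma_g \amalg G$ is precisely a pair of homomorphisms, one out of $\Gamma_g$ and one out of $G$, with no compatibility constraint between them. So I will construct a solution $\varphi \colon \Gamma_g \amalg G \to K \rtimes H$ by prescribing it separately on the two free factors, arranging that $\alpha \circ \varphi = \beta$ holds on each factor (hence on all of $\Gamma_g \amalg G$), and arranging that $\Imm(\varphi)$ already contains $K = \Ker(\alpha)$; properness will then follow immediately from \clmref{onto kernel claim}.

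On the factor $G$ I take the cheap choice: let $\gamma \colon H \to K \rtimes H$ be a section of $\alpha$ and put $\varphi|_G := \gamma \circ (\beta|_G)$, so that $\alpha \circ \varphi|_G = \beta|_G$ with no effort. On the factor $\Gamma_g$ a bit more is needed, since $\beta|_{\Gamma_g}$ is generally not onto $H$. I set $H_1 := \beta(\Gamma_g)$, a (closed) subgroup of $H$, let $\alpha_1 \colon K \rtimes H_1 \to H_1$ be the projection for the action of $H_1$ on $K$ inherited from $H$, and consider the finite embedding problem $\mathcal{E}_1 = \mathcal{E}(\Gamma_g, K \rtimes H_1, H_1, \alpha_1, \beta|_{\Gamma_g})$; this is split (via $\gamma|_{H_1}$) and now $\beta|_{\Gamma_g} \colon \Gamma_g \to H_1$ is surjective. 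Since $|K \rtimes H_1|^2 \, |H_1| = |K|^2 |H_1|^3 \leq |K|^2 |H|^3$, the hypothesis $g \geq 2|K|^2|H|^3$ yields $g \geq 2 |K \rtimes H_1|^2 |H_1|$, so \lemref{Lior and Pavel lemma} provides a proper solution $\varphi_1 \colon \Gamma_g \twoheadrightarrow K \rtimes H_1$. Composing with the natural inclusion $K \rtimes H_1 \hookrightarrow K \rtimes H$ (which is compatible with the projections, so that $\alpha$ restricted to $K \rtimes H_1$ is $\alpha_1$ followed by $H_1 \hookrightarrow H$) gives $\varphi|_{\Gamma_g} := \varphi_1$ with $\alpha \circ \varphi|_{\Gamma_g} = \beta|_{\Gamma_g}$.

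Assembling $\varphi$ from $\varphi|_{\Gamma_g}$ and $\varphi|_G$, the relation $\alpha \circ \varphi = \beta$ holds by construction, so $\varphi$ is a solution to $\mathcal{E} = \mathcal{E}(\Gamma_g \amalg G, K \rtimes H, H, \alpha, \beta)$; and $\Imm(\varphi) \supseteq \varphi_1(\Gamma_g) = K \rtimes H_1 \supseteq K = \Ker(\alpha)$, so \clmref{onto kernel claim} finishes the proof. The argument has essentially no hard step: the only point that requires care is that $\beta$ need not restrict to a surjection on either free factor, which is exactly what forces the passage to the image $H_1$ on the $\Gamma_g$-side and the use of the (non-surjective) split solution on the $G$-side; the numerical bound $g \geq 2|K|^2|H|^3$ was chosen precisely so that \lemref{Lior and Pavel lemma} applies to $\mathcal{E}_1$ regardless of which proper subgroup $H_1$ of $H$ arises, and one should just double-check the bookkeeping $|K|^2|H_1|^3 \leq |K|^2|H|^3$ and the compatibility of the inclusion $K \rtimes H_1 \hookrightarrow K \rtimes H$ with the structure maps.
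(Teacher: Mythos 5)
Your proposal is correct and follows essentially the same route as the paper: restrict to $H_1 = \beta(\Gamma_g)$, solve the induced split embedding problem over $K \rtimes H_1$ properly via \lemref{Lior and Pavel lemma}, map $G$ through a section of $\alpha$, glue by the universal property of the free product, and conclude with \clmref{onto kernel claim}. Your explicit verification of the bound $2|K|^2|H_1|^3 \le 2|K|^2|H|^3 \le g$ is a detail the paper leaves implicit, but the argument is the same.
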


\begin{proof}
Denote $H_0 = \beta(\Gamma_g)$. According to \lemref{Lior and Pavel lemma} the finite split embedding problem
$$\begin{tikzpicture}[scale=1.5]
\node (B) at (0,1) {$\Gamma_g$};
\node (C) at (-1,0) {$K \rtimes H_0$};
\node (D) at (0,0) {$H_0$};
\node (E) at (-2,0) {$K$};
\node (F) at (-3,0) {$1$};
\node (G) at (1,0) {$1$};
\path[->,font=\scriptsize,>=angle 90]
(B) edge node[right]{$\beta_{|\Gamma_g}$} (D)
(C) edge node[above]{} (D)
(D) edge node[above]{$$} (G)
(E) edge node[above]{$$} (C)
(F) edge node[above]{$$} (E);
\end{tikzpicture}$$
has a proper solution $\varphi_1$. Let $\varphi_2: G \rightarrow K \rtimes H$ be the map defined by $$\varphi_2(f) = (1,\beta_{|G}(f))$$ for every $f \in G$. 
There exists a unique homomorphism $$\varphi: \Gamma_g \amalg G \rightarrow K \rtimes H$$ such that $$\forall \  \gamma \in \Gamma_g \  \forall \ f \in G \  \varphi(\gamma) = \varphi_1(\gamma), \quad \varphi(f) = \varphi_2(f).$$ 
By the universal property of free products, $\varphi$ is a solution to the original embedding problem.
Since $K$ is contained in $\Imm(\varphi_1)$ it is also contained in $\Imm(\varphi)$ so by \clmref{onto kernel claim} $\varphi$ is a proper solution.
\end{proof}

\begin{theorem}
Let $N \lhd_c \Gamma_g$ be a normal subgroup of the profinite surface group of genus $g \geq 2$ such that $\Gamma_g / N$ is hereditarily just infinite. Then $N$ is semi-free.
\end{theorem}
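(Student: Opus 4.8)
The plan is to check the two requirements in the definition of a semi-free group: that $N$ has rank $\aleph_0$, and that every finite split embedding problem for $N$ has a proper solution.

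The rank statement should be short. As $\Gamma_g/N$ is infinite, $N\neq\{1\}$; since $\Gamma_g$ is torsion-free (for $g\geq 2$) it has no nontrivial finite normal subgroup, so $N$ is infinite, and being a closed subgroup of the finitely generated — hence second countable — group $\Gamma_g$ it has rank at most $\aleph_0$. For the reverse inequality I would use the embedding-problem part: granting it, the split problems $\mathcal{E}(N,\,C\times B,\,B,\,\mathrm{pr},\,\beta)$ — for one fixed nontrivial finite quotient $\beta\colon N\twoheadrightarrow B$ and $C$ an arbitrary finite group — have proper solutions, exhibiting arbitrarily large finite quotients of $N$, so $d(N)=\aleph_0$.

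Now fix a finite split embedding problem $\mathcal{E}=\mathcal{E}(N,A,B,\alpha,\beta)$, set $K:=\Ker(\alpha)$ (so $A\cong K\rtimes B$), $s:=d(K)$, $n:=|A|$, and fix a section $\gamma\colon B\to A$ of $\alpha$. The first step is to buy genus by passing to an open subgroup of $\Gamma_g$. Since $\Gamma_g/N$ is infinite and $\Ker(\beta)$ is open in $N$, one can pick an open normal subgroup $\Gamma_0\lhd\Gamma_g$ with $N\cap\Gamma_0\leq\Ker(\beta)$ and with $\Gamma_0$ as small as we like; then $\Gamma':=N\Gamma_0$ is open in $\Gamma_g$, hence, by \factref{surface subgroup}, a profinite surface group of genus $g':=[\Gamma_g:\Gamma'](g-1)+1$, and we may arrange $g'$ to be as large as we please. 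Here $N\lhd\Gamma'$ still has infinite index; the hypothesis $N\cap\Gamma_0\leq\Ker(\beta)$ makes $\beta$ factor through $\Gamma'\twoheadrightarrow\Gamma'/\Gamma_0$, giving a surjection $\bar{\beta}\colon\Gamma'\to B$ with $\bar{\beta}|_N=\beta$; and $\Gamma'/N$, being the image of $\Gamma_0$ in $\Gamma_g/N$, is an open normal subgroup of the hereditarily just infinite group $\Gamma_g/N$, hence itself just infinite. Put $\varphi:=\gamma\circ\bar{\beta}\colon\Gamma'\to A$, a solution of $\bar{\mathcal{E}}:=\mathcal{E}(\Gamma',A,B,\alpha,\bar{\beta})$ with $|K\varphi(\Gamma')|=n$.

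The remaining task — and the step I expect to be the main obstacle — is to exhibit a surface basis $x_1,\dots,x_{g'},y_1,\dots,y_{g'}$ of $\Gamma'$ with a block of at least $(sn+s)|B|^2$ of the generators $x_i$ lying in $N$. Granting such a basis, the endgame should run as in \lemref{Lior and Pavel lemma} and \lemref{Pavel lemma}: pigeonholing over the at most $|B|^2$ values of the pair $(\varphi(x_i),\varphi(y_i))$ on the block — note that $\varphi(x_i)=\gamma(\beta(x_i))$ since $x_i\in N$ — leaves $\geq sn+s$ indices with a common such pair, and shuffling those to the front by surface-basis changes (conjugations, which fix the chosen generators and preserve membership in the normal subgroup $N$) puts one exactly in the hypothesis of \lemref{Pavel lemma} for the pair $N\leq\Gamma'$, whose conclusion — via \clmref{onto kernel claim} — is a proper solution of $\mathcal{E}$. (The last step can alternatively be organized through free products, in the spirit of \corref{free product lemma}.) So the heart of the matter is the quantitative claim that a surface basis of $\Gamma'$ devotes only a few of its generators to the quotient $\Gamma'/N$ — equivalently, that $d(\Gamma'/N)$ is small next to $2g'$. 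This is exactly where the hereditarily-just-infinite hypothesis is indispensable: for a just infinite quotient the number of generators needed by an open subgroup of $\Gamma_g$ grows much more slowly than its index, in sharp contrast with the free-group picture where it grows linearly, so for any prescribed $\mathcal{E}$ one can shrink $\Gamma_0$ until the block is large enough. Without this input the estimate breaks down, and — as in \cite{LiorandPavel} — one can only enlarge $N$ to a semi-free normal overgroup rather than conclude that $N$ is itself semi-free.
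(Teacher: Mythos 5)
Your overall frame (pass to an open overgroup of large genus, extend the embedding problem, pigeonhole on $(\varphi(x_i),\varphi(y_i))$, feed the result into \lemref{Pavel lemma}) matches the first half of the paper's argument, but the step you yourself flag as ``the main obstacle'' is a genuine gap, and it is exactly the point where the paper does something different. You need a surface basis of $\Gamma'$ with a block of $(sn+s)|B|^2$ generators $x_i$ lying in $N$, and you propose to extract it from the claim that, since $\Gamma_g/N$ is hereditarily just infinite, $d(\Gamma'/N)$ grows sublinearly in the index. That claim is neither proved nor referenced, and I do not know it to be a theorem: just infiniteness is a statement about normal subgroups and carries no quantitative control on ranks of open subgroups (just infinite branch groups are a standard source of open subgroups of large rank). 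Even if one grants such a bound on $d(\Gamma'/N)$, you would still need a Gasch\"utz-type exchange argument producing a \emph{surface} basis (a generating tuple constrained by $\prod_i [x_i,y_i]=1$) adapted to the infinite-index subgroup $N$; the only basis moves exhibited in this setting are the conjugation tricks in the proof of \lemref{Lior and Pavel lemma}, and those only control images in a \emph{finite} quotient --- which is precisely why the paper can arrange $x_ix_1^{-1}\in\Ker(\beta)$ but never claims to arrange $x_i\in N$.

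The paper's actual route avoids your claim entirely. After \lemref{Lior and Pavel lemma} yields a basis with $x_jx_1^{-1},y_jy_1^{-1}\in\Ker(\beta)$ for $j\le m$, it splits into cases: if all $x_ix_1^{-1}$ happen to lie in $N$, the ``furthermore'' clause of \lemref{Lior and Pavel lemma} finishes; otherwise it passes to the quotient $F/\langle L,[x_1,y_1]^m\rangle^F\cong\Gamma_{h-m}\amalg G$, notes that $\psi^{-1}(\psi(N))$ is a normal subgroup of $F$ strictly containing $N$ --- and this is the \emph{only} place the hereditarily just infinite hypothesis is used: it makes $F/N$ just infinite, so $\psi^{-1}(\psi(N))$ is open --- then identifies $\psi(N)$ as an open subgroup of the free product, applies the Kurosh subgroup theorem to write it as $\Gamma_t\amalg\overline{G}$ with $t$ large, and solves the problem there via \corref{free product lemma}. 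So the hypothesis enters through ``every normal subgroup properly containing $N$ is open,'' not through any rank-growth estimate. (A secondary, fixable issue: your map $\bar\beta$ on $N\Gamma_0$ is not automatically a homomorphism even when $N\cap\Gamma_0\le\Ker(\beta)$, since $\Gamma_0$ need not act trivially on $N/\Ker(\beta)$; this is why the paper invokes \cite[Lemma 1.2.5(c)]{MoshesBook} to extend the embedding problem to an open overgroup. The surface-basis step, however, is not fixable as written.)
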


\begin{proof}
Let
$$\begin{tikzpicture}[scale=1.5]
\node (B) at (0,1) {$N$};
\node (C) at (-1,0) {$K \rtimes H$};
\node (D) at (0,0) {$H$};
\node (E) at (-2,0) {$K$};
\node (F) at (-3,0) {$1$};
\node (G) at (1,0) {$1$};
\path[->,font=\scriptsize,>=angle 90]
(B) edge node[right]{$\beta$} (D)
(C) edge node[above]{} (D)
(D) edge node[above]{$$} (G)
(E) edge node[above]{$$} (C)
(F) edge node[above]{$$} (E);
\end{tikzpicture}$$
be a finite split embedding problem for $N$. We shall prove it has a proper solution.

Using \cite[Lemma 1.2.5(c)]{MoshesBook}, 
we can extend our embedding problem to a subgroup $$N \leq F \lhd_o \Gamma_g$$ such that $$[\Gamma_g : F] \geq \frac{2|K|^2|H|^3+m-1}{g-1}$$ where $m = 2|K|^2|H|$. By \factref{surface subgroup}, $F$ is a profinite surface group of genus $$h = [\Gamma_g : F](g-1) + 1.$$ Note that $$h-m \geq 2|K|^2|H|^3.$$

Applying \lemref{Lior and Pavel lemma} to $F$, the extended embedding problem and $m$, we obtain a proper solution $\varphi$, and a surface basis $x_1,\dots,x_h,y_1,\dots,y_h$ of $F$ such that $$\forall \ 1 \leq j \leq m \ \ x_jx_1^{-1}, y_jy_1^{-1} \in \Ker(\beta).$$ If $x_ix_1^{-1} \in N$ for all $ 1 \leq i \leq m$ then by \lemref{Lior and Pavel lemma}, our original embedding problem has a proper solution.
Assume henceforth that $x_ix_1^{-1} \notin N$ for some $1 \leq i \leq m$.

The homomorphism $\beta$ factors modulo $$L = \langle x_2x_1^{-1},\dots,x_mx_1^{-1},y_2y_1^{-1},\dots,y_my_1^{-1} \rangle^F.$$ Note that $$F/L \cong \langle x_1, x_{m+1}, \dots,x_h,y_1, y_{m+1}, \dots,y_h \mid [x_1,y_1]^m \cdot \prod_{i=m+1}^h [x_i,y_i] = 1 \rangle$$ and that $$\beta \big( [x_1,y_1]^m \big) = \beta \big( [x_1,y_1] \big)^m = 1$$ so $\beta$ even factors through 
\begin{equation*}
\begin{split}
&F/\langle L, [x_1,y_1]^m \rangle^F \cong \\
&\langle x_i, y_i \mid \prod_{i=m+1}^h [x_i,y_i] = 1 \rangle \amalg \langle x_1,y_1 \mid [x_1,y_1]^m = 1 \rangle \cong \\
&\Gamma_{h-m} \amalg G
\end{split}
\end{equation*}
where $G = \langle x_1,y_1 \mid [x_1,y_1]^m = 1 \rangle$.

Let $$\psi \colon F \to F/\langle L, [x_1,y_1]^m \rangle^F$$ be the quotient map, and set $M = \psi(N)$. Since $\Gamma_g/N$ is hereditarily just infinite, $F/N$ is just infinite, so $\psi^{-1}(M)$ either equals $N$ or is open in $F$. The former is impossible as 
$$\exists \ 1 \leq i \leq m \ x_ix_1^{-1} \in \mathrm{Ker}(\psi) \subseteq \psi^{-1}(M), \quad x_ix_1^{-1} \notin N.$$
Hence $\psi^{-1}(M)$ is an open subgroup of $F$, so $M$ can be seen as an open subgroup of $\Gamma_{h-m} \amalg G$.

We can now write the original embedding problem as 
$$\begin{tikzpicture}[scale=1.5]
\node (B) at (0,2) {$N$};
\node (C) at (-1,0) {$K \rtimes H$};
\node (D) at (0,0) {$H$};
\node (E) at (-2,0) {$K$};
\node (F) at (-3,0) {$1$};
\node (G) at (1,0) {$1$};
\node (H) at (0,1) {$M$};
\path[->,font=\scriptsize,>=angle 90]
(B) edge node[right]{$\psi_{|N}$} (H)
(H) edge node[right]{$\bar{\beta}$} (D)
(C) edge node[above]{$$} (D)
(D) edge node[above]{$$} (G)
(E) edge node[above]{$$} (C)
(F) edge node[above]{$$} (E);
\end{tikzpicture}$$
so it is sufficient to properly solve it for $M$. 
Applying the Kurosh theorem (\cite[Theorem D.3.1]{PavelsBook}) we find that $M \cong \Gamma_t \amalg \overline{G}$ where $$t \geq h-m \geq 2|K|^2|H|^3$$ so by \corref{free product lemma} the desired proper solution exists.
\end{proof}


Repeating the above proof verbatim, one obtains the following.

\begin{theorem}
Let $\Gamma_g$ be a surface group and $N \leq_c \Gamma_g$ weakly maximal, then $N$ is semi-free profinite.
\end{theorem}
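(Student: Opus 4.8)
The plan is to reproduce the proof of the preceding theorem almost word for word, the only change being the single point at which hereditary just-infiniteness of $\Gamma_g/N$ was invoked. A weakly maximal subgroup $N$ has infinite index by definition, which is all one needs to start, and we take $g\ge 2$ as in the preceding theorem.

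Given a finite split embedding problem for $N$ with kernel $K$ and quotient $H$, I would first invoke \cite[Lemma 1.2.5(c)]{MoshesBook} to extend it to an open normal subgroup $N\le F\lhd_o\Gamma_g$ whose index is large enough that, writing $m:=2|K|^2|H|$, the genus $h:=[\Gamma_g:F](g-1)+1$ of the surface group $F$ (\factref{surface subgroup}) satisfies $h-m\ge 2|K|^2|H|^3$. Applying \lemref{Lior and Pavel lemma} to $F$, the extended problem, and $m$, I obtain a proper solution together with a surface basis $x_1,\dots,x_h,y_1,\dots,y_h$ of $F$ with $x_jx_1^{-1},\,y_jy_1^{-1}\in\Ker(\beta)$ for $1\le j\le m$. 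If $x_ix_1^{-1}\in N$ for every $1\le i\le m$, the ``furthermore'' part of \lemref{Lior and Pavel lemma} (applicable since $\beta(N)=H$) already produces a proper solution of the original problem, so assume henceforth that $x_ix_1^{-1}\notin N$ for some $i$.

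As in the preceding proof, $\beta$ then factors through the quotient map $\psi\colon F\to F/\langle L,[x_1,y_1]^m\rangle^F\cong\Gamma_{h-m}\amalg G$, and I set $M:=\psi(N)$. This is the one place where the hypothesis enters: $\psi^{-1}(M)=N\cdot\Ker(\psi)$ is a closed subgroup of $\Gamma_g$ containing $N$ --- it is a subgroup because $\Ker(\psi)$ is normal in $F$, even though $N$ itself need not be normal --- so weak maximality of $N$ forces $\psi^{-1}(M)$ either to equal $N$ or to be open in $\Gamma_g$, hence open in $F$. The former is impossible because $\Ker(\psi)\subseteq\psi^{-1}(M)$ contains some $x_ix_1^{-1}\notin N$. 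Therefore $\psi^{-1}(M)$ is open in $F$ and $M$ is an open subgroup of $\Gamma_{h-m}\amalg G$.

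From here the argument is again verbatim: it suffices to solve the embedding problem for $M$ in place of $N$ (through $\psi_{|N}$ and the induced surjection $M\to H$), and by the profinite Kurosh subgroup theorem \cite[Theorem D.3.1]{PavelsBook} one may write $M\cong\Gamma_t\amalg\overline{G}$, taking for $\Gamma_t$ the free factor $M\cap\Gamma_{h-m}$ arising in the Kurosh decomposition, which is an open subgroup of $\Gamma_{h-m}$ and hence (\factref{surface subgroup}) a surface group of genus $t\ge h-m\ge 2|K|^2|H|^3$; then \corref{free product lemma} yields a proper solution, which composed with $\psi_{|N}$ solves the original problem. I expect no genuine obstacle here, since all the work is already in the preceding theorem; the only step requiring care is the dichotomy above --- one must observe that $N\cdot\Ker(\psi)$ is a bona fide closed overgroup of $N$ inside $\Gamma_g$ (using normality of $\Ker(\psi)$ in $F$ rather than normality of $N$), so that weak maximality applies, and that openness in $\Gamma_g$ forces openness in $F$.
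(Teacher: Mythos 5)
Your proposal is correct and matches the paper, which simply states that the preceding proof goes through verbatim; you have moreover correctly isolated the one step that is not literally verbatim, namely replacing ``$F/N$ is just infinite'' by the observation that $\psi^{-1}(M)=N\cdot\Ker(\psi)$ is a closed overgroup of $N$, so weak maximality forces the same dichotomy. Nothing further is needed.
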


\section{Acknowledgments}
We would like to thank Lior Bary-Soroker and Pavel Zalesskii for useful remarks and discussions. The authors were partially supported by a grant of the Israel Science Foundation with cooperation of UGC no. 40/14.
Mark Shusterman is grateful to the Azrieli Foundation for the award of an Azrieli Fellowship.

\bibliographystyle{abbrv}
\bibliography{references}

\bigskip 

Raymond and Beverly Sackler School of Mathematical Sciences, Tel-Aviv University, Tel-Aviv, Israel.

\bigskip

matan.ginzburg@gmail.com

\bigskip

markshus@mail.tau.ac.il

\end{document}